\numberwithin{equation}{section}
\newcommand{\NN}{\mathbb{N}}
\newcommand{\RR}{{\mathbb{R}}}
\newcommand{\CC}{{\mathbb{C}}}
\newcommand{\cF}{{\mathcal{F}}}
\newcommand{\cH}{{\mathcal{H}}}
\newcommand{\Dom}{{\operatorname{Dom}}}
\newcommand{\eps}{\varepsilon}
\newcommand{\ov}{\overline}
\newcommand\la{\lambda}
\renewcommand{\d}{{\textrm{d}}}
\renewcommand\eps{\varepsilon}
\newcommand\sd{\sigma_{\rm disc}}
\newcommand\sess{\sigma_{\rm ess}}
\newcommand\mydot{\,\cdot\,}
\newcommand\ds{\displaystyle}
\newcommand\wh{\widehat}
\theoremstyle{plain}
\newtheorem{theorem}{Theorem}
\newtheorem{lemma}[theorem]{Lemma}
\theoremstyle{definition}
\newtheorem{assumption}{Assumption}
\theoremstyle{remark}
\newtheorem{remark}{Remark}
{Notation}
 \theoremstyle{definition}
 \theoremstyle{remark}
 \numberwithin{equation}{section}
\title[The spectrum of the spin-boson Hamiltonian for arbitrary coupling]{Spectral analysis of the spin-boson Hamiltonian with two photons for arbitrary coupling}
\begin{document}

\thanks{The author is grateful to Professor Alexander V.~Sobolev for fruitful discussions, reading the preliminary version of this manuscript and making useful suggestions and thanks the Department of Mathematics at University College London for the kind hospitality. The financial support of the \emph{Swiss National Science Foundation}, SNF, through the Early Postdoc.Mobility grant No.\ $168723$ is gratefully acknowledged.} 

\date{\today}

\author{Orif \,O.\ Ibrogimov}
\address{Department of Mathematics, 
	University College London,
	Gower Street, London,
	WC1E 6BT, ~UK}

\email{o.ibrogimov@ucl.ac.uk, o.ibrogimov@gmail.com}





\begin{abstract}
We study the spectrum of the spin-boson model with two photons in~$\RR^d$ for arbitrary coupling $\alpha>0$. It is shown that the discrete spectrum is finite and the essential spectrum consists of a half-line, the bottom of which is a unique zero of a simple Nevanlinna function. Besides the simplicity and more abstract nature of our approach, the main novelty is the achievement of these results under minimal regularity conditions on the photon dispersion and the coupling function. 
\end{abstract}

\maketitle

\section{Introduction}\label{sec:intro}
The spin-boson model is a well-known quantum-mechanical model which describes the interaction between a two-level atom and a photon field. We refer to \cite{Legget-1987} and \cite{Huebner-Spohn-1995-review} for excellent reviews from physical and mathematical perspectives, respectively. 

Despite the formal simplicity of the spin-boson model (from the physics viewpoint), its dynamics is rather complicated and rigorous spectral and scattering results are usually very difficult to obtain, especially in the case when the number of photons is unbounded. For weak coupling, starting with the pioneering work of H\"ubner and Spohn~\cite{Huebner-Spohn-1995} spectral and scattering properties of the full spin-boson model as well as of its finite photon approximations have been investigated extensively; see, for example, \cite{Spohn-CMP-1989,Zhukov-Milnos-1995, Huebner-Spohn-1995, Minlos-Spohn-1996, Gerard-1996,  Arai-Hirokawa-1997, deMonvel-Sahbani-1998, Skibsted-1998, Arai-2000, Hirokawa-2001RMP, Angelescu-Minlos-Ruiz-Zagrebnov-2008, Abdesselam-2011, Hasler-Herbst-2011, DeRoeck-Griesemer-Kupianinen-2015, Bach-Ballesteros-Koenenberg-Menrath-2017, Braunlich-Hasler-Lange-2018} and the references therein. 

In this paper we are concerned with the case of two photons which resembles the standard three-body situation. The Hilbert space of our system is $\CC^2\otimes\cF_s^2$, where $\cF_s^2$ is the truncated Fock space
\begin{equation}
\cF_s^2:=\CC \oplus L^2(\RR^d) \oplus L^2_s(\RR^d\times\RR^d)
\end{equation}
with $L^2_s(\RR^d\times\RR^d)$ standing for the subspace of $L^2(\RR^d\times\RR^d)$ consisting of symmetric functions, and equipped with 
the inner product 
\begin{equation}\label{inner.prod.in.Ls2}
(\phi,\psi)=\frac{1}{2}\int_{\RR^d}\int_{\RR^d}\phi(k_1,k_2)\ov{\psi(k_1,k_2)} \,\d k_1 \d k_2, \quad \phi,\psi\in L_s^2(\RR^d\times\RR^d).
\end{equation}
For $f=\bigl(f^{(\sigma)}_0, f^{(\sigma)}_1, f^{(\sigma)}_2\bigr)\in \CC^2\otimes\cF_s^2$, where $\sigma=\pm$ is the discrete variable, the Hamiltonian of our system is given by the formal expression 
\begin{equation}\label{Hamiltonian}
\begin{aligned}
(H_{\alpha}f)^{(\sigma)}_0 &= \sigma\eps f^{(\sigma)}_0 + \alpha\int_{\RR^d} \la(q)f^{(-\sigma)}_1\!(q) \,\d q,\\
(H_{\alpha}f)^{(\sigma)}_1\!(k) &=  (\sigma\eps+\omega(k))f^{(\sigma)}_1\!(k)+\alpha\la(k)f^{(-\sigma)}_0\! \\
&\hspace{1cm}+\alpha\int_{\RR^d}f^{(-\sigma)}_2\!(k,q)\la(q) \,\d q,\\
(H_{\alpha}f)^{(\sigma)}_2\!(k_1,k_2) &= (\sigma\eps+\omega(k_1)+\omega(k_2))f^{(\sigma)}_2\!(k_1,k_2)\\
&\hspace{1cm}+\alpha\la(k_1)f^{(-\sigma)}_1\!(k_2)+\alpha\la(k_2)f^{(-\sigma)}_1\!(k_1). 
\end{aligned}
\end{equation}
Here $\pm\eps$ ($\eps>0$) are the energy levels of the atom corresponding to its excited and ground states, respectively, $\omega(k)=|k|$ is the photon dispersion relation, $\alpha>0$ is the coupling constant and $\la$ is the coupling function given by the product of $\sqrt{\omega(k)}$ with a cut-off function for large~$k$. The spatial dimension, $d\geq1$, plays no particular role in our analysis and is left arbitrary. 

In general, the dispersion relation $\omega\geq0$ and the coupling function $\la$ are fixed by the physics of the problem. Motivated by different applications of \eqref{Hamiltonian} (e.g. in solid state physics, see \cite{Spohn-CMP-1989}), 
one likes to consider them as free parameter functions and impose only some general conditions such as 
\begin{equation}\label{cond:la.la.omega.in.L2}
\la\in L^2(\RR^d), \quad \frac{\la}{\sqrt{\omega}}\in L^2(\RR^d)
\end{equation}
whenever $\inf\omega=0$. To the best of our knowledge, every study on the spectrum of the spin-boson model in the up-to-date literature assumes at least \eqref{cond:la.la.omega.in.L2} or its strengthened version where the second condition in \eqref{cond:la.la.omega.in.L2} is replaced by  
\begin{equation}\label{cond2:la.omega.in.L2}
\frac{\la}{\omega}\in L^2(\RR^d),
\end{equation}
which is known as the \emph{infrared regularity condition} (see, for example, \cite{Hirokawa-2001RMP, Davies1981_SymmetryBreaking}).

There are many papers in the current literature containing rigorous results on the spectrum of $H_{\alpha}$ for \emph{small} coupling. It is known that, under appropriate conditions in addition to \eqref{cond:la.la.omega.in.L2}-\eqref{cond2:la.omega.in.L2}, there exists a sufficiently small coupling constant $\alpha_0>0$ such that for all $\alpha\in (0,\alpha_0)$ the spectrum of $H_{\alpha}$ consists of a unique simple discrete eigenvalue separated by a gap from the purely absolutely continuous part of the spectrum which is a half-line. This result was obtained in \cite{Minlos-Spohn-1996} by means of the scattering theory and also in \cite{Huebner-Spohn-1995} by means of the conjugate operator method (here we would like to note that \cite{Huebner-Spohn-1995} as well as \cite{Huebner94atominteracting, Huebner-Spohn-1995-review} treat also the case of arbitrarily finite number of photons). 

On the other hand, not much is known for general coupling, without any assumptions about its smallness. There are rather few papers devoted to the spectral theory of the spin-boson model (with or without particle number cut-off) as well as more general abstract models sometimes called Pauli-Fierz models or generalized spin-boson models; see, for instance, \cite{Huebner-Spohn-1995-review, Huebner94atominteracting, Arai-Hirokawa-1997, Derezinski-Gerard-Rev.Amth.Phys-1999, Georgescu-Gerard-Moller-CMP2004, Hasler-Herbst-2011}. In these studies the location of the essential spectrum is given for any value of the coupling constant, and results including finiteness of the point spectrum or absence of the singular continuous spectrum are proven under various assumptions in addition to \eqref{cond:la.la.omega.in.L2}. We would also like to mention \cite{Merkli-CMP2015} where the authors study the dynamics of the spin-boson model at arbitrary coupling strength. There is a recent related work \cite{MNR-2016-1D} on the location of the bottom of the essential spectrum and the finiteness of the number of eigenvalues on the left of it. Although \cite{MNR-2016-1D} does not need assumptions such as \eqref{cond2:la.omega.in.L2}, it is required that the underlying domain is a compact subset of the real line and that the parameter functions $\la$ and $\omega$ are real-analytic. 

In the present paper we aim to establish the finiteness of the discrete spectrum along with an explicit description of the essential spectrum under fairly weak regularity conditions on the parameter functions $\omega$ and $\la$ by developing a new, simple and instructive approach. It turns out that the continuity of $\omega$ and the condition $\la\in L^2(\RR^d)$ are sufficient to achieve our goal. In particular, neither the infrared regularity \eqref{cond2:la.omega.in.L2} nor the second condition in \eqref{cond:la.la.omega.in.L2} is needed. We would also like to emphasize that our main result -- Theorem~\ref{thm1} -- holds even if the level set of $\omega$ corresponding to the photon mass is of positive Lebesgue measure, whereas it was always assumed to be a finite or a Lebesgue null set in the up-to-date literature. The methods we employ to achieve these results are direct and of abstract nature, allowing for simpler proofs. In particular, we benefit from block operator matrix techniques involving Schur complements and the corresponding Frobenius-Schur factorizations combined with the standard perturbation theory. 

The paper is organized as follows. In Section~\ref{sec:main.results} we give precise formulations of our main results (Theorems~\ref{thm1}-\ref{thm2}). In Section~\ref{subsec:prelimiaries} we explain the reduction of the problem to the spectral analysis of a $2\times 2$ operator matrix and describe the Schur complement of the latter. The detailed proofs of the main results are given in Sections~\ref{subsec:proof.of.thm1}-\ref{subsec:proof.of.thm2}. 

Throughout the paper we adopt the following notations. For a self-adjoint operator $T$ acting in a Hilbert space and a constant $\mu \in \RR$ such that $\mu\leq\min\sess(T)$, we denote by $N(\mu; T)$ the number of the eigenvalues of $T$ less than $\mu$ (counted with multiplicities). Note that $N(\mu; T)$ coincides with the dimension of the spectral subspace of $T$ corresponding to the interval $(-\infty,\mu)$, 
see \cite[Section~IX]{Birman-Solomjak-87b}. The integrals with no indication of the limits imply the integration over the whole
space $\RR^d$ or $\RR^d\times\RR^d$. The Euclidean and operator norms are denoted by $|\mydot|$ and $\|\mydot\|$, respectively. 

\section{Summary of the main results}\label{sec:main.results}

Throughout the paper we assume the following hypotheses. 
\begin{assumption}\label{assumption}
	\emph{The parameter $\eps>0$ is fixed, the photon dispersion relation \linebreak $\omega:\RR^d\to\RR$ is an unbounded continuous function with 
		\begin{equation}\label{def:m}
		m:=\inf_{k\in\RR^d} \omega(k)\geq0, 
		\end{equation}
		the coupling function $\la:\RR^d\to\CC$ is not identically zero and 
		satisfies}
	\begin{equation}\label{ass:la.in.L2}
	\la\in L^2(\RR^d).
	\end{equation}
\end{assumption}
\begin{remark}
	If $\la\equiv0$ 
	on $\RR^d$, then the photons do not couple to the atom and the description of the spectrum is straightforward. As the relativistic photon dispersion relation $\omega(k)=\sqrt{k^2+m^2}$ suggests, the constant $m$ defined in \eqref{def:m} may play a role of the ``mass" of the photon.
\end{remark} 
The natural domain of the unperturbed operator $H_0$ is given by
\begin{equation}\label{nat.dom.H0}
\Dom(H_0):=\CC^2\otimes\{\CC\oplus\cH_1\oplus\cH_2\}
\end{equation}
with $\cH_1$ and $\cH_2$ standing for the weighted $L^2$- Hilbert spaces
\begin{equation}\label{Dom:H1}
\cH_1:=\Big\{f\in L^2(\RR^d): \int|\omega(k)|^2|f(k)|^2\,\d k<\infty\Big\}
\end{equation}
and
\begin{equation}\label{Dom:H2}
\cH_2:=\Big\{g\in L^2_s(\RR^d\times\RR^d): \int|\omega(k_1)+\omega(k_2)|^2|g(k_1,k_2)|^2\,\d k_1\d k_2<\infty\Big\}.
\end{equation}
The condition \eqref{ass:la.in.L2} implies the boundedness of the perturbation and thus the expression for $H_{\alpha}$ given in \eqref{Hamiltonian} generates a self-adjoint operator in the Hilbert space $\CC^2\otimes\cF_s^2$ on the natural domain of $H_0$ (see \cite[Theorem~V.4.3]{Kato}). For notational convenience, we denote the corresponding self-adjoint operator again by~$H_{\alpha}$.

Consider the continuous functions
\begin{equation}\label{Phi}
\Phi^{(\sigma)}_{\alpha}(z)=-\sigma\eps-z-\alpha^2\int\frac{|\la(q)|^2 \,\d q}{\omega(q)+\sigma\eps-z}, \quad z\in(-\infty,m+\sigma\eps)
\end{equation}
with the discrete variable $\sigma=\pm$ and the constant $\eps>0$ corresponding to the excited state energy of the atom. It is easy to see that $\Phi^{(\sigma)}_{\alpha}$ are strictly decreasing 
and 
\begin{equation}\label{lim.at.-infty}
\lim_{z\downarrow-\infty}\Phi^{(\sigma)}_{\alpha}(z)=+\infty.
\end{equation}
Moreover, the monotone convergence theorem implies the existence of the (possibly improper) limits
\begin{equation}\label{mon.conv.thm:lim.at.boundary}
\lim_{z\uparrow m+\sigma\eps}\Phi^{(\sigma)}_{\alpha}(z)=:\Phi^{(\sigma)}_{\alpha}(m+\sigma\eps).
\end{equation}
We distinguish the two cases:
\begin{enumerate}[\underline{\upshape{Case}} 1:]
	\item \emph{It holds that}
	\begin{equation}\label{strong.ultrared.singularity}
	\frac{\la}{\sqrt{\omega-m}}\notin L^2(\RR^d).
	\end{equation} 
	In this case the limits in \eqref{mon.conv.thm:lim.at.boundary} are negative infinity for all $\alpha>0$. Hence, \eqref{lim.at.-infty} and the monotonicity imply that each of the continuous functions $\Phi^{(\sigma)}_{\alpha}$ has \emph{a unique zero}, denoted by $E_{\sigma\eps}(\alpha)$,  in the interval $(-\infty,m+\sigma\eps)$ for all $\alpha>0$. We define
	\begin{equation}\label{def1:E(alpha)}
	E(\alpha):=\min\{E_{\eps}(\alpha), E_{-\eps}(\alpha)\}. 	
	\end{equation}
	\item \emph{It holds that}
	\begin{equation}\label{weak.ultrared.regularity}
	\frac{\la}{\sqrt{\omega-m}}\in L^2(\RR^d).
	\end{equation} 
	In this case the limits in \eqref{mon.conv.thm:lim.at.boundary} are finite for all $\alpha>0$ and given by
	\begin{equation}
	\Phi^{(\sigma)}_{\alpha}(m+\sigma\eps)=-2\sigma\eps-m-\alpha^2\int\frac{|\la(q)|^2 \d q}{\omega(q)-m}.
	\end{equation}
	We distinguish the two subcases: 
	\begin{enumerate}[\upshape a)]
		\item \emph{Either $m\geq2\eps$ and $\alpha>0$ is arbitrary, or $m<2\eps$ and $\alpha>\alpha_{\emph{cr}}$, where} 
		\begin{equation}\label{alpha0}
		\ds\alpha_{\text{cr}}:=\frac{\sqrt{2\eps-m}}{\bigl\|\frac{\la}{\sqrt{\omega-m}}\bigr\|_{L^2(\RR^d)}}.
		\end{equation}
		In this case $\Phi^{(\sigma)}_{\alpha}(m+\sigma\eps)<0$ for each $\sigma=\pm$. Hence, \eqref{lim.at.-infty} and the monotonicity again imply that each of the continuous functions $\Phi^{(\sigma)}_{\alpha}$ has \emph{a unique zero} (again denoted by) $E_{\sigma\eps}(\alpha)$ in the interval $(-\infty,m+\sigma\eps)$. Once again, we define $E(\alpha)$ as in \eqref{def1:E(alpha)}.
		\item \emph{The case $m<2\eps$ and $0<\alpha\leq\alpha_{\emph{cr}}$.} In this case $\Phi^{(-)}_{\alpha}(m-\eps)\geq0$ and $\Phi^{(+)}_{\alpha}(m+\eps)<0$. Consequently, \eqref{lim.at.-infty} and the monotonicity of the continuous functions $\Phi^{(\sigma)}_{\alpha}$ imply that $\Phi^{(-)}_{\alpha}$ does not vanish in the interval $(-\infty,m-\eps)$, whereas $\Phi^{(+)}_{\alpha}$ has \emph{a unique zero}, denoted by $E_{\eps}(\alpha)$, in the interval $(-\infty,m+\eps)$. In this case, we define
		\begin{equation}\label{def2:E(alpha)}
		E(\alpha):=E_{\eps}(\alpha).
		\end{equation}
	\end{enumerate}
\end{enumerate}

The following theorems summarize the main results of the present paper.

\begin{theorem}\label{thm1}
	Let Assumption~\ref{assumption} hold. Let the coupling constant $\alpha>0$ be arbitrary and let $E(\alpha)$ be defined as in one of \eqref{def1:E(alpha)} and \eqref{def2:E(alpha)}. Then  
	\begin{enumerate}[\upshape i)]
		\item
		\label{thm:part.i} 
		the essential spectrum of $H_\alpha$ is given by
		\begin{equation}\label{form.ess.spec.H}
		\sess(H_{\alpha})=[m+E(\alpha),\infty). 
		\end{equation}
		\item\label{thm:part.ii} 
		the discrete spectrum of $H_{\alpha}$ is finite.
	\end{enumerate}		 
\end{theorem}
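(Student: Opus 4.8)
The plan is to exploit a conserved parity of $H_\alpha$ to split the problem into two decoupled three-level chains, and then to analyse each chain by a Frobenius--Schur factorization in which the non-compact coupling between the one- and two-photon sectors is treated explicitly. First I would observe that the quantity (photon number parity)$\,\times\sigma$ is preserved by \eqref{Hamiltonian}, since every coupling term simultaneously flips $\sigma$ and changes the photon number by one. This splits $\CC^2\otimes\cF_s^2$ into two invariant subspaces, on each of which $H_\alpha$ acts as a tridiagonal $3\times 3$ block operator matrix on $\CC\oplus L^2(\RR^d)\oplus L^2_s(\RR^d\times\RR^d)$, with scalar atom block $\sigma\eps$, one-photon multiplier $\omega(k)-\sigma\eps$, two-photon multiplier $\omega(k_1)+\omega(k_2)+\sigma\eps$, a rank-one atom/one-photon coupling, and the one-/two-photon coupling $g\mapsto\alpha[\la(k_1)g(k_2)+\la(k_2)g(k_1)]$. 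This last operator is \emph{not} compact, so $\sess(H_\alpha)$ is not merely that of the free part; this is the crux of the analysis.

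For the location \eqref{form.ess.spec.H} I would fix a spectral parameter $z$ below the two-photon threshold $\sigma\eps+2m$, with $m$ as in \eqref{def:m}; there the two-photon diagonal block $A_2$ is boundedly invertible, so, writing $B_{12}$ for the one-/two-photon coupling, the Frobenius--Schur factorization shows that $H_\alpha-z$ is Fredholm if and only if the induced $2\times2$ Schur complement $M(z)$ on $\CC\oplus L^2(\RR^d)$ is. Computing the self-energy $B_{12}(A_2-z)^{-1}B_{12}^{*}$ on the one-photon space, one finds it splits as a Hilbert--Schmidt integral operator with kernel proportional to $\la(k)\overline{\la(q)}(\sigma\eps+\omega(k)+\omega(q)-z)^{-1}$ plus multiplication by $\alpha^2\int|\la(q)|^2(\sigma\eps+\omega(k)+\omega(q)-z)^{-1}\,\d q$. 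The first part is compact, since $\la\in L^2(\RR^d)$ by Assumption~\ref{assumption} and the denominator is bounded below, while the second combines with $\omega(k)-\sigma\eps-z$ to give exactly the multiplier $\Phi^{(\sigma)}_{\alpha}(z-\omega(k))$ from \eqref{Phi}. Modulo compact operators $M(z)$ is therefore block-diagonal with a finite-dimensional atom block and the multiplication operator by $k\mapsto\Phi^{(\sigma)}_{\alpha}(z-\omega(k))$; since a multiplication operator on $L^2(\RR^d)$ is Fredholm precisely when its symbol, continuous by the continuity of $\omega$, is bounded away from zero, $z$ lies below the essential spectrum of this sector iff $\Phi^{(\sigma)}_{\alpha}(z-\omega(k))\neq0$ for all $k$. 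By the monotonicity and uniqueness of the zero $E_{\sigma\eps}(\alpha)$ and the fact that $\omega$ has range $[m,\infty)$, this fails exactly when $z\geq m+E_{\sigma\eps}(\alpha)$, so taking the union over $\sigma=\pm$ yields the lower edge $m+E(\alpha)$. The reverse inclusion, that the whole half-line lies in $\sess(H_\alpha)$, I would obtain by an HVZ-type Weyl sequence, tensoring the bound state of the one-photon model at energy $E_{\sigma\eps}(\alpha)$ with a spatially spreading free-photon packet of dispersion $\omega(k)\to\lambda-E_{\sigma\eps}(\alpha)$.

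For the finiteness in \ref{thm:part.ii} I would work below $\Lambda:=m+E(\alpha)$, where by the above the multiplier $\Phi^{(\sigma)}_{\alpha}(z-\omega(\cdot))$ is bounded below by the positive constant $\Phi^{(\sigma)}_{\alpha}(z-m)>0$, so that the ``free'' part of $M(z)$ is uniformly positive and invertible. A point $z<\Lambda$ is an eigenvalue of $H_\alpha$ iff $M(z)$ is not invertible, and factoring out the positive free part converts this into a Birman--Schwinger condition: $z$ is an eigenvalue iff $1$ is an eigenvalue of a compact operator $K(z)$ assembled from the Hilbert--Schmidt self-energy kernel and the rank-one atom/one-photon coupling. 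Using that $K(z)$ is monotone in $z$ and that the associated eigenvalue counting function is locally constant, finiteness of the discrete spectrum follows once $K(z)$ is shown to converge, as $z\uparrow\Lambda$, to a compact operator $K(\Lambda)$ with only finitely many eigenvalues above $1$.

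The main obstacle, and the point where the hypotheses are used sharply, is precisely this threshold compactness. The Hilbert--Schmidt norm of the self-energy kernel is controlled by $\iint|\la(k)|^2|\la(q)|^2(\sigma\eps+\omega(k)+\omega(q)-z)^{-2}\,\d k\,\d q$, and the denominator stays bounded below by $\sigma\eps+2m-\Lambda=\sigma\eps+m-E(\alpha)>0$ uniformly up to $z=\Lambda$, because $E(\alpha)<m+\sigma\eps$ keeps $\Lambda$ strictly beneath the two-photon threshold. Hence $\la\in L^2(\RR^d)$ alone makes $K(\Lambda)$ compact, with no need for the infrared or $\la/\sqrt{\omega}$ conditions, and the continuity of $\omega$ is exactly what makes the multiplication-operator Fredholm criterion applicable. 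Verifying that the Schur complements and Birman--Schwinger operators depend norm-continuously on $z$ up to the threshold, and that the passage $z\uparrow\Lambda$ creates no accumulation of eigenvalues at $m+E(\alpha)$, is the part that will require the most care.
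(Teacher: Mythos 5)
Your architecture for part i) --- parity decomposition into two tridiagonal chains, Frobenius--Schur factorization below the two-photon threshold, identification of the self-energy as the multiplier $\Phi^{(\sigma)}_{\alpha}(z-\omega(k))$ from \eqref{Phi} plus a Hilbert--Schmidt operator, and the Fredholm/essential-range criterion for the resulting multiplication operator --- is essentially the paper's proof, and that part of your proposal is sound. (The paper obtains $[2m+\sigma\eps,\infty)\subset\sess(\wh H_{\alpha}^{(\sigma)})$ from a Weyl sequence of two concentrating photon packets rather than your HVZ tensor construction with the one-photon bound state, but both work; the paper also first strips off the atom block as a rank-$\leq4$ perturbation instead of carrying it inside the Schur complement, which is immaterial.)

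The genuine gap is in part ii), at exactly the point you flag as ``threshold compactness''. The operator whose compactness you need at $z=\Lambda=m+E_{\sigma\eps}(\alpha)$ is not the self-energy $K^{(\sigma)}(\Lambda)$ itself but the Birman--Schwinger operator $\bigl(\Delta_{\alpha}^{(\sigma)}(\Lambda)\bigr)^{-1/2}K^{(\sigma)}(\Lambda)\bigl(\Delta_{\alpha}^{(\sigma)}(\Lambda)\bigr)^{-1/2}$ obtained by ``factoring out the positive free part''. Your estimate controls only the former: the denominator $\sigma\eps+\omega(k)+\omega(q)-z$ is indeed bounded below uniformly up to $z=\Lambda$, so $K^{(\sigma)}(\Lambda)$ is Hilbert--Schmidt from $\la\in L^2$ alone. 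But the free part is \emph{not} uniformly positive at $z=\Lambda$: by \eqref{Delta.sim.omega} the multiplier $\Delta_{\alpha}^{(\sigma)}(k;\Lambda)$ vanishes like $\omega(k)-m$ at the bottom of $\omega$, so the sandwiching weights behave like $(\omega(k)-m)^{-1/2}$ and the naive Hilbert--Schmidt bound for the sandwiched kernel requires $\la/\sqrt{\omega-m}\in L^2(\RR^d)$, i.e.\ precisely the condition \eqref{weak.ultrared.regularity} that the theorem does not assume (the paper's remark after the proof notes that under \eqref{weak.ultrared.regularity} your simpler route does work). Without it an extra idea is needed: the paper splits $K^{(\sigma)}(z)=K_1^{(\sigma)}(z)+K_2^{(\sigma)}(z)$ as in \eqref{decomposition.of.K}, where $K_1^{(\sigma)}(z)$ is rank two (hence costs only $+2$ in the counting function) and the kernel of $K_2^{(\sigma)}$ at threshold is shown, via the elementary inequality \eqref{elementary.ineq}, to satisfy \eqref{estimate.pointwise.on.Psi2}, i.e.\ to be $O\bigl(|\la(k_1)||\la(k_2)|\sqrt{\omega(k_1)-m}\,\sqrt{\omega(k_2)-m}\,\bigr)$ --- exactly cancelling the singular weights, so the sandwiched operator stays Hilbert--Schmidt assuming only $\la\in L^2(\RR^d)$. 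A further point your proposal does not address is that $\Delta_{\alpha}^{(\sigma)}(\cdot;\Lambda)$ may vanish on a level set of $\omega$ of positive measure, so one must restrict to $\Omega=\{\omega\neq m\}$ before inverting. As written, your argument establishes part ii) only under the additional hypothesis \eqref{weak.ultrared.regularity}.
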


The next result provides more explicit description of the bottom of the essential spectrum for weak coupling.
\begin{theorem}\label{thm2}
	Let Assumption~\ref{assumption} hold. Assume that either \eqref{weak.ultrared.regularity} is satisfied or $m>0$. Then, for all sufficiently small $\alpha>0$, we have  
	\begin{equation}\label{form.ess.spec.H.weak.coupling}
	\sess(H_{\alpha})=[m+E_{\eps}(\alpha),\infty). 
	\end{equation}
	Moreover, the asymptotic expansion holds
	\begin{equation}\label{asymp.exp.bottom}
	E_{\eps}(\alpha)=-\eps-\alpha^2\Bigl\|\frac{\la}{\sqrt{\omega+2\eps}}\Bigr\|^2_{L^2(\RR^d)} + \rm{o}(\alpha^2), \quad \alpha\downarrow 0.
	\end{equation}
\end{theorem}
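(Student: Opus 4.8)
The plan is to derive both assertions from Theorem~\ref{thm1}, which already identifies $\sess(H_\alpha)=[m+E(\alpha),\infty)$ with $E(\alpha)$ given by \eqref{def1:E(alpha)} or \eqref{def2:E(alpha)}. Accordingly, \eqref{form.ess.spec.H.weak.coupling} reduces to showing that $E(\alpha)=E_\eps(\alpha)$ for all sufficiently small $\alpha>0$, after which the expansion \eqref{asymp.exp.bottom} is obtained by a self-contained analysis of the scalar equation $\Phi^{(+)}_\alpha(E_\eps(\alpha))=0$ coming from \eqref{Phi}.

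For the identification $E(\alpha)=E_\eps(\alpha)$ I would test the strictly decreasing functions $\Phi^{(\sigma)}_\alpha$ at the single point $z=-\eps$. As $-\eps<m+\eps$ for every $m\geq0$, the value $-\eps$ always lies in the domain of $\Phi^{(+)}_\alpha$, and \eqref{Phi} gives
\[
\Phi^{(+)}_\alpha(-\eps)=-\alpha^2\int\frac{|\la(q)|^2\,\d q}{\omega(q)+2\eps}<0,
\]
so monotonicity forces $E_\eps(\alpha)<-\eps$ for every $\alpha>0$. To compare with $E_{-\eps}(\alpha)$ I would split according to the cases preceding the theorem. If \eqref{weak.ultrared.regularity} holds and $m<2\eps$, then every small $\alpha$ satisfies $\alpha\leq\alpha_{\mathrm{cr}}$ (see \eqref{alpha0}), placing us in Case~2b), where $E_{-\eps}(\alpha)$ is undefined and $E(\alpha)=E_\eps(\alpha)$ holds by \eqref{def2:E(alpha)}. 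In every remaining configuration available at small $\alpha$ -- namely Case~1, in which the hypothesis of the theorem forces $m>0$ because \eqref{weak.ultrared.regularity} fails, or Case~2 with $m\geq2\eps$ -- one has $m>0$, so $z=-\eps<m-\eps$ is an admissible argument of $\Phi^{(-)}_\alpha$ and
\[
\Phi^{(-)}_\alpha(-\eps)=2\eps-\alpha^2\int\frac{|\la(q)|^2\,\d q}{\omega(q)},
\]
the integral being finite since $\omega\geq m>0$ and $\la\in L^2(\RR^d)$. Hence $\Phi^{(-)}_\alpha(-\eps)>0$ for all small $\alpha$, which yields $E_{-\eps}(\alpha)>-\eps$. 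Combining the two inequalities gives $E_\eps(\alpha)<-\eps<E_{-\eps}(\alpha)$, so $E(\alpha)=E_\eps(\alpha)$ by \eqref{def1:E(alpha)}, and \eqref{form.ess.spec.H.weak.coupling} follows.

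For the expansion I would set $\delta(\alpha):=-\eps-E_\eps(\alpha)>0$ and rewrite $\Phi^{(+)}_\alpha(E_\eps(\alpha))=0$, using $\omega+\eps-E_\eps(\alpha)=\omega+2\eps+\delta(\alpha)$, as the self-consistent relation
\[
\delta(\alpha)=\alpha^2\int\frac{|\la(q)|^2\,\d q}{\omega(q)+2\eps+\delta(\alpha)}.
\]
Since the integrand is dominated by $|\la|^2/(2\eps)\in L^1(\RR^d)$ uniformly in $\delta\geq0$, the integral is bounded by $\|\la\|^2_{L^2(\RR^d)}/(2\eps)$; this already yields $0<\delta(\alpha)\leq\alpha^2\|\la\|^2_{L^2(\RR^d)}/(2\eps)$ and hence $\delta(\alpha)\to0$, that is, $E_\eps(\alpha)\to-\eps$. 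Substituting $\delta(\alpha)\to0$ back and invoking dominated convergence gives
\[
\int\frac{|\la(q)|^2\,\d q}{\omega(q)+2\eps+\delta(\alpha)}\longrightarrow\Bigl\|\tfrac{\la}{\sqrt{\omega+2\eps}}\Bigr\|^2_{L^2(\RR^d)},\qquad\alpha\downarrow0,
\]
so that $\delta(\alpha)=\alpha^2\bigl\|\la/\sqrt{\omega+2\eps}\bigr\|^2_{L^2(\RR^d)}+\mathrm{o}(\alpha^2)$, which is precisely \eqref{asymp.exp.bottom}.

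The estimates themselves are elementary; the delicate point is the bookkeeping of Cases~1, 2a), 2b), so as to guarantee that every small $\alpha$ falls either into Case~2b) (where $E(\alpha)=E_\eps(\alpha)$ by definition) or into a configuration with $m>0$ (where $-\eps$ is an admissible argument of $\Phi^{(-)}_\alpha$ and the comparison applies). This is exactly where the hypothesis ``\eqref{weak.ultrared.regularity} or $m>0$'' enters: it excludes Case~1 with $m=0$, the single situation in which $E_{-\eps}(\alpha)\to-\eps$ as $\alpha\downarrow0$ and the strict separation $E_\eps(\alpha)<E_{-\eps}(\alpha)$ could break down.
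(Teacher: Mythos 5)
Your proof is correct, but it takes a genuinely different route from the paper in both halves. For the identification $E(\alpha)=E_\eps(\alpha)$, the paper evaluates $\Phi^{(-)}_{\alpha}$ at the moving point $E_\eps(\alpha)$ itself: under \eqref{weak.ultrared.regularity} it combines this with the a priori bound $E_\eps(\alpha)<m-\eps$ to get positivity for all $\alpha$ satisfying the explicit threshold \eqref{bound.on.small.alpha}, and when \eqref{weak.ultrared.regularity} fails but $m>0$ it feeds the already-proved expansion \eqref{asymp.exp.bottom} into the same evaluation. You instead test both $\Phi^{(\pm)}_{\alpha}$ at the fixed point $z=-\eps$, obtaining the clean separation $E_\eps(\alpha)<-\eps<E_{-\eps}(\alpha)$ without any recourse to the expansion; this is shorter and logically decouples the two assertions, but it is purely qualitative and does not produce the quantitative admissible range \eqref{bound.on.small.alpha} that the paper extracts from its proof and records in its remark. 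For the expansion, the paper rescales to $\psi(x,y)$ and invokes the implicit function theorem, which yields continuous differentiability of $\alpha\mapsto E_\eps(\alpha)$ as a by-product; your self-consistent equation $\delta(\alpha)=\alpha^2\int|\la(q)|^2(\omega(q)+2\eps+\delta(\alpha))^{-1}\,\d q$ with the uniform bound $\delta(\alpha)\leq\alpha^2\|\la\|^2_{L^2}/(2\eps)$ and dominated (or monotone) convergence is more elementary and delivers exactly the two-term expansion needed. Your case bookkeeping is exhaustive and correct: the hypothesis of the theorem indeed guarantees that every configuration at small $\alpha$ is either Case 2b) (where \eqref{def2:E(alpha)} applies) or has $m>0$ so that $-\eps$ lies in the domain $(-\infty,m-\eps)$ of $\Phi^{(-)}_{\alpha}$ and $\int|\la(q)|^2\omega(q)^{-1}\,\d q\leq\|\la\|^2_{L^2}/m<\infty$.
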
 

\begin{remark}
	\begin{enumerate}[\upshape i)]
		\item Whenever \eqref{weak.ultrared.regularity} is satisfied, one can make it more explicit how small should the coupling constant be so that $E(\alpha)=E_\eps(\alpha)$ 
		holds. In fact, it follows from the proof of Theorem~\ref{thm2} that  $E(\alpha)=E_\eps(\alpha)$ for all $\alpha>0$ satisfying
		\begin{equation}\label{bound.on.small.alpha}
		\ds\alpha\leq\frac{\sqrt{2\eps}}{\bigl\|\frac{\la}{\sqrt{\omega-m}}\bigr\|_{L^2(\RR^d)}}.
		\end{equation}
		In the massless case \eqref{bound.on.small.alpha} is equivalent to $\alpha\leq\alpha_{\text{cr}}$ and $E(\alpha)=E_\eps(\alpha)$ holds trivially (see \eqref{def2:E(alpha)}).
		\item There is no difficulty in verifying that $E(\alpha)$ corresponds to the ground state energy of the one-boson Hamiltonian (see Section~\ref{subsec:appendix.2}). In the massless case $m=0$ Theorem~\ref{thm1}\ref{thm:part.i}) is thus an analogue of the Hunziker--van Winter--Zhislin (HVZ) theorem of the standard three-body Schr\"odinger operators, see~\cite[Section~XIII]{Reed-Simon-IV} and \cite{Huebner-Spohn-1995-review}. 
	\end{enumerate}
\end{remark}	

\section{Proofs of the main results}\label{sec:proofs}
Unless specified, we assume throughout this section that the discrete variable $\sigma=\pm$ is fixed.
\subsection{Preliminaries}\label{subsec:prelimiaries}
By means of the unitary transformation $U:\CC^2\otimes\cF_s^2\to\cF_s^2\oplus\cF_s^2$, defined by
\begin{equation}
U:
\begin{pmatrix}
\begin{pmatrix}
f_0^{(+)}\\
f_0^{(-)}
\end{pmatrix},
\begin{pmatrix}
f_1^{(+)}\\
f_1^{(-)}
\end{pmatrix}, 
\begin{pmatrix}
f_2^{(+)}\\
f_2^{(-)}
\end{pmatrix}
\end{pmatrix}
\mapsto
\begin{pmatrix}
\begin{pmatrix}
f_0^{(+)}\\
f_1^{(-)}\\
f_2^{(+)}
\end{pmatrix},
\begin{pmatrix}
f_0^{(-)}\\
f_1^{(+)}\\
f_2^{(-)}
\end{pmatrix}
\end{pmatrix},
\end{equation}
we can block-diagonalize the Hamiltonian $H_{\alpha}$ in \eqref{Hamiltonian}, that is, 
\begin{equation}\label{diagonalization.of.Hamiltonian}
U^*H_{\alpha}U={\rm{diag}}\{H_{\alpha}^{(+)}, H_{\alpha}^{(-)}\}
\end{equation}
where 
\begin{equation}
H_{\alpha}^{(\sigma)}:=
\begin{pmatrix}
H^{(\sigma)}_{00} & \alpha H_{01} & 0\\[0.5ex]
\alpha H_{10} & H^{(\sigma)}_{11} & \alpha H_{12}\\[0.5ex]
0 & \alpha H_{21} & H^{(\sigma)}_{22}
\end{pmatrix}
\end{equation}
is the tridiagonal operator matrix acting in the truncated Fock space $\cF_s^2$ on the~domain 
\begin{equation}\label{domain.of.H.sigma}
\Dom(H_{\alpha}^{(\sigma)}):=\CC\oplus\cH_1\oplus\cH_2
\end{equation} 
with the weighted Hilbert spaces $\cH_1$, $\cH_2$ given in \eqref{Dom:H1}-\eqref{Dom:H2}. The operator entries of $H_{\alpha}^{(\sigma)}$ are defined by 
\begin{equation}
\begin{aligned}
&H^{(\sigma)}_{00} f_0 = \sigma\eps f_0, \quad  H_{01}f_1 = \int \la(q)f_1(q) \,\d q, \quad (H_{10}f_0)(k)=f_0\ov{\la(k)},\\ \label{def.op.entries.2}
&(H^{(\sigma)}_{11} f_1)(k) = (-\sigma\eps+\omega(k))f_1(k), \quad (H_{12}f_2)(k)= \int f_2(k,q)\la(q) \,\d q,\\ 
&(H_{21}f)(k_1,k_2) = \ov{\la(k_1)}f(k_2)+\ov{\la(k_2)}f(k_1)
\end{aligned}
\end{equation}
and 
\begin{equation*}
(H^{(\sigma)}_{22}f_2)(k_1,k_2) = (\sigma\eps+\omega(k_1)+\omega(k_2))f_2(k_1,k_2), \quad (f_0,f_1,f_2)\in\cF_s^2.
\end{equation*}
It follows from the condition \eqref{ass:la.in.L2} that $H_{12}\!:\!L^2_s(\RR^d\times\RR^d)\!\to\! L^2(\RR^d)$ and $H_{01}\!:\!L^2(\RR^d)\!\to\!\CC$ are bounded operators with $H_{10}=H_{01}^*$ and $H_{21}=H_{12}^*$. Hence, $H_{\alpha}^{(\sigma)}$ is self-adjoint on the domain \eqref{domain.of.H.sigma} (see \cite[Theorem~V.4.3]{Kato}). 

Denoting by $P:\cF_s^2\to\cH_1\oplus\cH_2$ the projection operator onto the last two components in the Hilbert space $\cF_s^2$, we can decompose $H_{\alpha}^{(\sigma)}$ as follows
\begin{equation}\label{splitting.of.H.sigma}
H_{\alpha}^{(\sigma)}=P^*\wh H_{\alpha}^{(\sigma)}P + 
\begin{pmatrix}
H^{(\sigma)}_{00} & \alpha H_{01} & 0\\[1ex]
\alpha H^*_{01} & 0 & 0\\[1ex]
0 & 0 & 0			
\end{pmatrix},
\end{equation}
where 
\begin{equation}\label{op.matrix.A}
\wh H_{\alpha}^{(\sigma)}:=\begin{pmatrix}
H^{(\sigma)}_{11} & \alpha H_{12}\\[1ex]
\alpha H^*_{12} & H^{(\sigma)}_{22}
\end{pmatrix}, \quad \Dom(\wh H_{\alpha}^{(\sigma)})=\cH_1\oplus\cH_2.
\end{equation}
The second operator on the right-hand-side of \eqref{splitting.of.H.sigma} maps $\cF_s^2$ onto the two-dimensional subspac $\CC\oplus\text{Span}\{\la\}\oplus\{0\}\subset\cF_s^2$. Since the essential spectrum as well as the finiteness of the discrete spectrum of self-adjoint operators are invariant with respect to finite-rank perturbations (see, for example, \cite[Chapter~9]{Birman-Solomjak-87b}), in view of \eqref{diagonalization.of.Hamiltonian}, it follows that 
\begin{equation}\label{ess.spec.relation}
\sess(H_{\alpha})=\sess(\wh H_{\alpha}^{(+)}) \cup \sess(\wh H_{\alpha}^{(-)})
\end{equation}
and 
\begin{equation}\label{disc.spec.relation}
N\bigl(z; H_{\alpha}\bigr) \leq N\bigl(z; \wh H_{\alpha}^{(+)}\bigr) + N\bigl(z; \wh H_{\alpha}^{(-)}\bigr)+4
\end{equation}
for all $z\leq\min\sess(H_{\alpha})$ provided that the right-hand-side of \eqref{disc.spec.relation} is finite. 
That is why, from now on we completely focus on the $2\times 2$ operator matrix $\wh H_{\alpha}^{(\sigma)}$ defined in \eqref{op.matrix.A} and acting in the Hilbert space
\begin{equation}
\cH:=L^2(\RR^d)\oplus L^2_s(\RR^d\times\RR^d). 
\end{equation} 
It is easy to see that $H^{(\sigma)}_{22}:L^2_s(\RR^d\times\RR^d)\to L^2_s(\RR^d\times\RR^d)$ is a self-adjoint operator on the domain $\Dom(H^{(\sigma)}_{22})=\cH_2$ with the spectrum
\begin{equation}
\sigma(H^{(\sigma)}_{22})=[2m+\sigma\eps,\infty).
\end{equation}

As it was mentioned in Introduction, our approach is based on the so-called Schur complement and the corresponding Frobenius-Schur factorization. This has proven to be a powerful and natural tool when dealing with spectral properties of $2\times 2$ operator matrices such as $\wh H_{\alpha}^{(\sigma)}$, see e.g. \cite{Tre08}. We would like to mention that the Schur complement method employed in this paper was used previously in the spectral analysis of Pauli-Fierz Hamiltonians – under the name Feshbach method or Feshbach-Schur method – in \cite{Bach-Froechlich-Sigal-Adv.Math1998} and \cite{DerJak-JFA2001}, for instance.

For the rest of this subsection we assume $z\in(-\infty, 2m+\sigma\eps)=\rho(H^{(\sigma)}_{22})$ to be fixed. The Schur complement of $H^{(\sigma)}_{22}-z$ in the operator matrix $\wh H_{\alpha}^{(\sigma)}-z$ is defined by  
\begin{equation*}
S_{\alpha}^{(\sigma)}(z) := H^{(\sigma)}_{11}-z-\alpha^2H_{12}(H^{(\sigma)}_{22}-z)^{-1}H_{12}^*, \quad \Dom(S_{\alpha}^{(\sigma)}(z)) := \cH_1.
\end{equation*}
It can be checked easily that 
\begin{equation}\label{Schur2}
S_{\alpha}^{(\sigma)}(z) = \Delta_{\alpha}^{(\sigma)}(z)-\alpha^2 K^{(\sigma)}(z),
\end{equation}
where, $\Delta_{\alpha}^{(\sigma)}(z):L^2(\RR^d)\to L^2(\RR^d)$ is the operator of multiplication by the function 
\begin{equation}\label{def:Delta}
\begin{aligned}
\Delta_{\alpha}^{(\sigma)}(k;z):\!&= \Phi_{\alpha}^{(\sigma)}(z-\omega(k))\\
&=\omega(k)-\sigma\eps-z-\alpha^2\int\frac{|\la(q)|^2 \d q}{\omega(k)+\omega(q)+\sigma\eps-z}
\end{aligned}
\end{equation}
on the domain
\begin{equation}
\Dom(\Delta_{\alpha}^{(\sigma)}(z)):=\cH_1
\end{equation}
and $K^{(\sigma)}(z): L^2(\RR^d)\to L^2(\RR^d)$ is the integral operator with the kernel 
\begin{equation}\label{kernel:p}
p^{(\sigma)}(k_1,k_2;z):=\frac{\ov{\la(k_1)}\la(k_2)}{\omega(k_1)+\omega(k_2)+\sigma\eps-z}
\end{equation}
on the domain
\begin{equation}
\Dom(K^{(\sigma)}(z)):=\cH_1.
\end{equation} 
Since $F^{(\sigma)}(z):=H_{12}(H^{(\sigma)}_{22}-z)^{-1}$ is a bounded operator from $L^2_s(\RR^d\times\RR^d)$ to $L^2(\RR^d)$, the following Frobenius-Schur factorization holds 
\begin{equation}\label{Frobenius-Schur}
\wh H_{\alpha}^{(\sigma)}-z = V_{\alpha}^{(\sigma)}(z) W_{\alpha}^{(\sigma)}(z)\bigl(V_{\alpha}^{(\sigma)}(z)\bigr)^*
\end{equation}
where
\begin{equation}
V_{\alpha}^{(\sigma)}(z):=
\begin{pmatrix}
I & \! \alpha F^{(\sigma)}(z)\\[1ex]
0 & \! I
\end{pmatrix}, \quad
W^{(\sigma)}(z):= 
\begin{pmatrix}
S_{\alpha}^{(\sigma)}(z) & \! 0\\[1ex]
0 & \! H^{(\sigma)}_{22}-z
\end{pmatrix}
\end{equation}
see \cite[Chapter~2]{Tre08}. One of the important consequences of this factorization is the relation
\begin{equation}\label{Schur.power.ess.spec}
\sess(\wh H_{\alpha}^{(\sigma)})\cap\rho(H^{(\sigma)}_{22})=\sess(S_{\alpha}^{(\sigma)})\cap\rho(H^{(\sigma)}_{22}).
\end{equation}
Moreover, we have the following result.
\begin{lemma}\label{lem:EV.count.1}
	Let $z<2m+\sigma\eps$. Then 
	\begin{equation}
	N\bigl(z;\wh H_{\alpha}^{(\sigma)}\bigr) = N\bigl(0;S_{\alpha}^{(\sigma)}(z)\bigr).
	\end{equation}
\end{lemma}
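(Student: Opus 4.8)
The plan is to reduce the identity to an operator version of Sylvester's law of inertia and to read it off from the Frobenius--Schur factorization \eqref{Frobenius-Schur}. For a bounded-below self-adjoint operator $T$ write $n_-(T)$ for the dimension of the spectral subspace associated with $(-\infty,0)$; with the convention fixed in the introduction this is $n_-(T)=N(0;T)$, and since translation only shifts the spectral measure, $N(z;\wh H_\alpha^{(\sigma)})=N(0;\wh H_\alpha^{(\sigma)}-z)=n_-(\wh H_\alpha^{(\sigma)}-z)$. Hence it suffices to prove $n_-(\wh H_\alpha^{(\sigma)}-z)=n_-(S_\alpha^{(\sigma)}(z))$, the equality being understood as an identity in $\{0,1,2,\dots\}\cup\{\infty\}$. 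Both operators are bounded below: $H^{(\sigma)}_{11}$ and $H^{(\sigma)}_{22}$ are bounded below by $m-\sigma\eps$ and $2m+\sigma\eps$ respectively while the off-diagonal block $\alpha H_{12}$ is bounded, so $\wh H_\alpha^{(\sigma)}$ is bounded below; likewise $\Delta_\alpha^{(\sigma)}(z)$ is a bounded-below multiplication operator and $\alpha^2K^{(\sigma)}(z)$ is bounded (indeed Hilbert--Schmidt, its kernel \eqref{kernel:p} lying in $L^2$), so $S_\alpha^{(\sigma)}(z)$ is bounded below and $n_-$ is defined for both.

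The main tool is Glazman's variational characterization (see \cite[Chapter~9]{Birman-Solomjak-87b}): for a bounded-below self-adjoint $T$,
\[
n_-(T)=\sup\bigl\{\dim L:\ L\subseteq\Dom(T),\ (Tu,u)<0\ \text{for all } u\in L\setminus\{0\}\bigr\}.
\]
Feeding \eqref{Frobenius-Schur} into the quadratic form, for $u\in\Dom(\wh H_\alpha^{(\sigma)})=\cH_1\oplus\cH_2$ I would write
\[
\bigl((\wh H_\alpha^{(\sigma)}-z)u,u\bigr)=\bigl(W^{(\sigma)}(z)\,(V_\alpha^{(\sigma)}(z))^*u,\ (V_\alpha^{(\sigma)}(z))^*u\bigr).
\]
The factor $(V_\alpha^{(\sigma)}(z))^*=\bigl(\begin{smallmatrix} I & 0\\ \alpha F^{(\sigma)}(z)^* & I\end{smallmatrix}\bigr)$ is bounded with bounded inverse, and since $F^{(\sigma)}(z)^*=(H^{(\sigma)}_{22}-z)^{-1}H_{12}^*$ maps $\cH_1$ into $\Dom(H^{(\sigma)}_{22})=\cH_2$, the operator $(V_\alpha^{(\sigma)}(z))^*$ is a linear bijection of $\cH_1\oplus\cH_2=\Dom(\wh H_\alpha^{(\sigma)})$ onto $\Dom(S_\alpha^{(\sigma)}(z))\oplus\Dom(H^{(\sigma)}_{22})=\Dom(W^{(\sigma)}(z))$. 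Consequently $L\mapsto(V_\alpha^{(\sigma)}(z))^*L$ is a dimension-preserving bijection between the subspaces on which $\wh H_\alpha^{(\sigma)}-z$ is negative and those on which $W^{(\sigma)}(z)$ is negative, and the displayed supremum yields $n_-(\wh H_\alpha^{(\sigma)}-z)=n_-(W^{(\sigma)}(z))$.

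It remains to compute $n_-(W^{(\sigma)}(z))$. Since $W^{(\sigma)}(z)$ is block-diagonal its spectral subspaces split, so $n_-(W^{(\sigma)}(z))=n_-(S_\alpha^{(\sigma)}(z))+n_-(H^{(\sigma)}_{22}-z)$. As $z<2m+\sigma\eps=\min\sigma(H^{(\sigma)}_{22})$ we have $H^{(\sigma)}_{22}-z\geq(2m+\sigma\eps-z)I>0$, whence $n_-(H^{(\sigma)}_{22}-z)=0$ and $n_-(W^{(\sigma)}(z))=n_-(S_\alpha^{(\sigma)}(z))=N(0;S_\alpha^{(\sigma)}(z))$. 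Concatenating the equalities of the previous steps gives the claim.

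I expect the only delicate point to be the rigorous deployment of the inertia/Glazman argument for the unbounded operators at hand: one must check that $(V_\alpha^{(\sigma)}(z))^*$ genuinely preserves the operator domains — which reduces to the mapping property of $F^{(\sigma)}(z)^*$ noted above — so that the two variational suprema run over matching classes of test subspaces, and one must read the equality correctly when either side is infinite, which occurs exactly when $z$ lies above $\min\sess(\wh H_\alpha^{(\sigma)})$, equivalently when $\Delta_\alpha^{(\sigma)}(\,\cdot\,;z)$ is negative on a set of positive measure. A purely form-based version of Glazman's lemma would work equally well and would dispose of the domain bookkeeping, at the cost of verifying that $(V_\alpha^{(\sigma)}(z))^*$ maps the form domain of $\wh H_\alpha^{(\sigma)}$ onto that of $W^{(\sigma)}(z)$.
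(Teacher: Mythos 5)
Your argument is correct and is essentially the paper's own proof: both rest on the Frobenius--Schur factorization \eqref{Frobenius-Schur}, the invariance of $N(0;\cdot)$ under the congruence by the boundedly invertible factor $V_{\alpha}^{(\sigma)}(z)$, and the vanishing of $N\bigl(0;H^{(\sigma)}_{22}-z\bigr)$ for $z<2m+\sigma\eps$. The only difference is that you spell out the congruence-invariance step via Glazman's variational principle and verify the domain-mapping property of $\bigl(V_{\alpha}^{(\sigma)}(z)\bigr)^*$, details the paper leaves implicit.
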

\begin{proof}
	The map $V_{\alpha}^{(\sigma)}(z):\cH\to\cH$ is bounded and bijective. Since $z<\min\sigma(H^{(\sigma)}_{22})$, it is obvious that    
	\begin{equation}\label{D.has.no.EV}
	N\bigl(0;H^{(\sigma)}_{22}-z\bigr)=0.
	\end{equation}
	The Frobenius-Schur factorization \eqref{Frobenius-Schur} yields 
	\begin{equation}\label{lem:estimates.EV.cf.100}
	\begin{aligned}
	N\bigl(z;\wh H_{\alpha}^{(\sigma)}\bigr) &=N\bigl(0;\bigl(V_{\alpha}^{(\sigma)}(z)\bigr)^{-1}(\wh H_{\alpha}^{(\sigma)}-z)\bigl(\bigl(V_{\alpha}^{(\sigma)}(z)\bigr)^{-1}\bigr)^*\bigr)\\
	&=N\bigl(0;W_{\alpha}^{(\sigma)}(z)\bigr) =N\bigl(0;S_{\alpha}^{(\sigma)}(z)\bigr), 
	\end{aligned} 
	\end{equation}
	where the last equality is an immediate consequence of \eqref{D.has.no.EV}.
\end{proof}

\begin{remark}\label{rem:on.Delta.and.K}
	In view of Assumption~\ref{assumption} it follows that, for each $z<2m+\sigma\eps$, 
	\begin{enumerate}[\upshape i)]
		\item the operators $S_{\alpha}^{(\sigma)}(z)\!:\!L^2(\RR^d)\to L^2(\RR^d)$ and $\Delta_{\alpha}^{(\sigma)}(z)\!:\!L^2(\RR^d)\to L^2(\RR^d)$ are bounded from below and self-adjoint on $\cH_1$;
		\item the integral operator $K^{(\sigma)}(z):L^2(\RR^d)\to L^2(\RR^d)$ is self-adjoint and Hilbert-Schmidt as its kernel $p^{(\sigma)}(\mydot,\mydot;z)$	belongs to $L^2(\RR^d\!\times\RR^d)$ with 
		\begin{equation}
		\|p^{(\sigma)}(\mydot,\mydot;z)\|_{L^2(\RR^d\times\RR^d)}\leq  \frac{\|\la\|^2_{L^2(\RR^d)}}{2m+\sigma\eps-z}.
		\end{equation} 
	\end{enumerate} 
\end{remark}

\subsection{Proof of Theorem~\ref{thm1}}\label{subsec:proof.of.thm1}
We recall that the function $\Phi^{(\sigma)}_{\alpha}$ defined in \eqref{Phi} is strictly decreasing in the interval $(-\infty, m+\sigma\eps)$. Hence, 
\begin{equation}\label{relation.between.P.and.Delta}
\inf_{k\in\RR^d}\Delta^{(\sigma)}_{\alpha}(k;z)=\inf_{k\in\RR^d}\Phi^{(\sigma)}_{\alpha}(z-\omega(k))=\Phi^{(\sigma)}_{\alpha}(z-m), \quad z<2m+\sigma\eps.
\end{equation}
Conceptually, the proof of the first part of Theorem~\ref{thm1} is similar to that of \cite[Theorem~3.1]{Ibrogimov-Tretter-OTAA2018}, but the technical details are quite different.	

\subsubsection{Proof of \upshape{Theorem~\ref{thm1}\ref{thm:part.i})}} 

The proof will be done in three steps.

\smallskip
\noindent
\underline{Step} 1: In this step we show that
\begin{equation}
[2m+\sigma\eps,\infty)\subset\sess(\wh H_{\alpha}^{(\sigma)}).
\end{equation}
Let $z_0\in(2m+\sigma\eps,\infty)$ be arbitrary. Then $z_0=2\omega(k_0)+\sigma\eps$ for some $k_0\in\RR^d$. Further, let $\varphi\in C^\infty(\RR^d)$ be an arbitrary function supported in the annulus $\{k\in \RR^d : 0.5\leq\|k\|<1\}$ and such that $\|\varphi\|_{L^2(\RR^d)}=1$. Then the sequence $\{\varphi_n\}_{n \in \NN}$ defined by  
\begin{equation}
\varphi_n(k) := 2^{\frac{nd}{2}} \varphi(2^n(k-k_0)), \quad k \in \RR^d, 
\end{equation}
is an orthonormal system in $L^2(\RR^d)$. It follows that the sequence $\{\psi_n\}_{n\in\NN}$ defined by
\begin{equation}
\psi_n(k_1,k_2)=\sqrt{2}\varphi_n(k_1) \varphi_n(k_2), \quad k_1,k_2\in\RR^d,
\end{equation} 
is an orthonormal system in $L^2_s(\RR^d\times\RR^d)$ and the Lebesgue's dominated convergence theorem implies that  
\begin{equation}\label{D.sing.seq}
\|(H^{(\sigma)}_{22}-z_0) \psi_n\|_{L^2_s(\RR^d\times\RR^d)} \to 0, \quad \text{as} \quad n\to\infty.
\end{equation}
On the other hand, in view of \eqref{ass:la.in.L2}, the weak convergence (to zero) of the orthonormal system $\{\varphi_n\}_{n\in\NN}$ in $L^2(\RR^d)$ yields
\begin{equation}\label{sing.seq.B}
\|H_{12}\psi_n\|_{L^2(\RR^d)} \leq \sqrt{2}\left|\int \la(q) \varphi_n(q)\,\d q \right|\to 0, \quad n \to \infty.
\end{equation}
It is obvious that the sequence $\{\Psi_n\}_{n\in\NN}:=\{(0,\psi_n)^t\}_{n\in\NN}$ is an orthonormal system in $\cH=L^2(\RR^d)\oplus L^2_s(\RR^d\times\RR^d)$. Hence, the relation  
\begin{equation*}
\|(\wh H_{\alpha}^{(\sigma)}-z_0) \Psi_n\|^2_{\cH}= \alpha^2\|H_{12} \psi_n\|^2_{L^2(\RR^d)}+ \|(H^{(\sigma)}_{22}-z_0) \psi_n\|^2_{L^2_s(\RR^d\times\RR^d)}
\end{equation*}
together with \eqref{D.sing.seq}-\eqref{sing.seq.B} imply that $\{\Psi_n\}_{n \in \NN}$ is a singular sequence for $\wh H_{\alpha}^{(\sigma)}-z_0$. That is why $z_0\in\sess(\wh H_{\alpha}^{(\sigma)})$, see \cite[Theorem~IX.1.2]{Birman-Solomjak-87b}. Because $z_0\in(2m+\sigma\eps,\infty)$ was arbitrary, the claim follows from the closedness of the essential spectrum.

\medskip
\noindent
\underline{Step} 2: In this step we prove that the set $\sess(\wh H_{\alpha}^{(\sigma)}) \cap (-\infty,2m+\sigma\eps)$ is empty if \eqref{weak.ultrared.regularity} holds together with $m<-2\sigma\eps$ and $0<\alpha\leq\alpha_{\text{cr}}$, and consists of $[m+E_{\sigma\eps}(\alpha),2m+\sigma\eps)$, otherwise. To this end, let $z\in(-\infty,2m+\sigma\eps)$ be fixed. Recalling the relation \eqref{Schur.power.ess.spec}, we obtain
\begin{equation}\label{ess.equiv1}
z \in \sess(\wh H_{\alpha}^{(\sigma)}) \quad \Longleftrightarrow \quad 0 \in \sess(S_{\alpha}^{(\sigma)}(z)).
\end{equation}
Because $K^{(\sigma)}(z):L^2(\RR^d)\to L^2(\RR^d)$ is compact operator and the essential spectrum is invariant with respect to compact perturbations, we have     
\begin{equation}\label{ess.spec.S(z)}
\sess(S_{\alpha}^{(\sigma)}(z)) = \sess(\Delta_{\alpha}^{(\sigma)}(z)-K^{(\sigma)}(z))=\sess(\Delta_{\alpha}^{(\sigma)}(z)),
\end{equation}
see \eqref{Schur2}, Remark~\ref{rem:on.Delta.and.K} and \cite[Theorem~IX.2.1]{EE87}. On the other hand, $\Delta_{\alpha}^{(\sigma)}(z)$ is the operator of multiplication by the continuous function $\Delta_{\alpha}^{(\sigma)}(\mydot;z)$ and thus $\sess(\Delta_{\alpha}^{(\sigma)}(z))$ coincides with the closure of the range of the function $\Delta_{\alpha}^{(\sigma)}(\mydot;z)$ over $\RR^d$. Hence, we infer from \eqref{ess.equiv1}-\eqref{ess.spec.S(z)} that $z \in \sess(\wh H_{\alpha}^{(\sigma)})$ if and only if $0$ is a limit point of the range of the function $\Delta_{\alpha}^{(\sigma)}(\mydot;z)$ over $\RR^d$. Since $\omega$ is unbounded, the monotonicity of the function $\Phi_{\alpha}^{(\sigma)}$ implies that
\begin{equation}\label{sup.omega.infty}
\sup_{k\in\RR^d}\Delta_{\alpha}^{(\sigma)}(k;z)=\sup_{k\in\RR^d}\Phi_{\alpha}^{(\sigma)}(z-\omega(k))=+\infty.
\end{equation}
Recalling \eqref{relation.between.P.and.Delta} and the continuity of $\Delta_{\alpha}^{(\sigma)}(\mydot;z)$ in $z$, we thus obtain
\begin{equation}\label{ess.spec.abstract.1}
\sess(\wh H_{\alpha}^{(\sigma)}) \cap (-\infty,2m+\sigma\eps)=\bigl\{z<2m+\sigma\eps: \Phi^{(\sigma)}_{\alpha}(z-m)\leq 0\bigr\}.
\end{equation}
On the other hand, the function $\Phi^{(\sigma)}_{\alpha}(\mydot-m)$ is strictly decreasing in $(-\infty,2m+\sigma\eps)$ and thus $\Phi^{(\sigma)}_{\alpha}(z-m)>\Phi^{(\sigma)}_{\alpha}(m+\sigma\eps)\geq0$ for each $z<2m+\sigma\eps$ whenever \eqref{weak.ultrared.regularity} holds together with $m<-2\sigma\eps$ and $0<\alpha\leq\alpha_{\text{cr}}$. Otherwise, $\Phi^{(\sigma)}_{\alpha}(m+\sigma\eps)<0$ and $\Phi^{(\sigma)}_{\alpha}(z-m)<\Phi^{(\sigma)}_{\alpha}(E_{\sigma\eps}(\alpha))=0$ for all $z\in (m+E_{\sigma\eps}(\alpha), 2m+\sigma\eps)$. However, $\Phi^{(\sigma)}_{\alpha}(z-m)>\Phi^{(\sigma)}_{\alpha}(E_{\sigma\eps}(\alpha))=0$ for all $z<m+E_{\sigma\eps}(\alpha)$. In view of \eqref{ess.spec.abstract.1}, these observations yield the desired result.

\medskip
\noindent
\underline{Step} 3: In this step we combine previous two steps to derive \eqref{form.ess.spec.H}. To this end, first we recall that $m\geq0$ and $\eps>0$. Hence, the two steps yield  
\begin{equation}\label{sess.H+}
\sess(\wh H_{\alpha}^{(+)})=[m\!+\!E_{\eps}(\alpha),\infty) \quad \text{for all} \quad \alpha>0,
\end{equation}
whereas 
\begin{equation}\label{sess.H-}
\sess(\wh H_{\alpha}^{(-)})\!=\!\begin{cases} 
[2m\!-\!\eps,\infty) &\!\!\!\! \mbox{if \eqref{weak.ultrared.regularity} holds, } m\!<\!2\eps \mbox{ and } \alpha\leq\alpha_{\text{cr}}, \\ 
[m\!+\!E_{-\eps}(\alpha), \infty) &\!\!\!\! \mbox{otherwise}.
\end{cases}
\end{equation}
It is easy to see that $m-\eps>E_\eps(\alpha)$ for all $\alpha>0$, which is a simple consequence of the relation
\begin{equation}\label{m-e.big.E(alpha)}
\Phi^{(+)}_{\alpha}(m-\eps)=-m-\alpha^2\int\frac{|\la(q)|^2 \d q}{\omega(q)-m+2\eps}<0.
\end{equation}
Therefore, combining \eqref{sess.H+}, \eqref{sess.H-} and \eqref{ess.spec.relation}, we obtain \eqref{form.ess.spec.H}.
\qed

\subsubsection{Proof of \upshape{Theorem~\ref{thm1}\ref{thm:part.ii})}} 
First we recall that
\begin{equation*}
\min\sess(\wh H_{\alpha}^{(+)})=m+E_{\eps}(\alpha) \quad \text{for all} \quad \alpha>0
\end{equation*}
and that
\begin{equation*}\label{min.sess.H-}
\min\sess(\wh H_{\alpha}^{(-)})=\begin{cases} 
2m-\eps & \mbox{if \eqref{weak.ultrared.regularity} holds, } m<2\eps \mbox{ and } \alpha\leq\alpha_{\text{cr}}, \\ m+E_{-\eps}(\alpha) & \mbox{otherwise}. 
\end{cases}
\end{equation*}
If \eqref{weak.ultrared.regularity} holds together with $m<2\eps$ and $0<\alpha\leq\alpha_{\text{cr}}$, then the claim follows by \eqref{disc.spec.relation} if we show only the relation $N\bigl(m+E_{\eps}(\alpha); \wh H_{\alpha}^{(+)}\bigr)<\infty$. To see this one should observe that $m+E_{\eps}(\alpha)<2m-\eps$ for all $\alpha>0$, see \eqref{m-e.big.E(alpha)}, and recall that the eigenvalues of a self-adjoint operator cannot accumulate below the bottom of its essential spectrum. Otherwise, the claim follows by \eqref{disc.spec.relation} if we show both of the relations
\begin{equation}\label{thm.disc.proof.aim}
N\bigl(m+E_{\sigma\eps}(\alpha); \wh H_{\alpha}^{(\sigma)}\bigr)<\infty, \quad \sigma=\pm.
\end{equation}
In fact, there is no loss of generality in focusing on the latter case as the proof we give below does not require any restrictions in the case $\sigma=+$. To this end, we fix the discrete variable $\sigma$ and denote $m+E_{\sigma\eps}(\alpha)$ by $z_0$ for notational convenience. Further, let $z\leq z_0$ be fixed for a moment. Since $(-\infty,z_0]\subset(-\infty,2m+\sigma\eps)=\rho(H^{(\sigma)}_{22})$, the integral operator $K^{(\sigma)}(z):L^2(\RR^d)\to L^2(\RR^d)$ is well-defined and compact (see Remark~\ref{rem:on.Delta.and.K}). Consider the decomposition
\begin{equation}
\frac{1}{\omega(k_1)+\omega(k_2)+\sigma\eps-z}=\Psi^{(\sigma)}_1(k_1,k_2;z)+\Psi^{(\sigma)}_2(k_1,k_2;z),
\end{equation}
where
\begin{equation}\label{kernel:k_1} \Psi^{(\sigma)}_1\!(k_1,k_2;z)\!:=\!\frac{1}{\omega(k_1)\!+\!m\!+\!\sigma\eps\!-\!z}+\frac{1}{\omega(k_2)\!+\!m\!+\!\sigma\eps\!-\!z}-\frac{1}{2m\!+\!\sigma\eps\!-\!z}
\end{equation}
and
\begin{equation}\label{kernel:k_2}
\Psi^{(\sigma)}_2\!(k_1,k_2;z)\!:=\!\frac{1}{\omega(k_1)\!+\!\omega(k_2)\!+\!\sigma\eps\!-\!z}-\Psi^{(\sigma)}_1\!(k_1,k_2;z),\quad k_1,k_2\in\RR^d.
\end{equation}
Let us denote by $K^{(\sigma)}_1(z)$ and $K^{(\sigma)}_2(z)$ the integral operators in $L^2(\RR^d)$ whose kernels are respectively the functions $(k_1,k_2)\mapsto\alpha^2\la(k_1)\ov{\la(k_2)}\Psi^{(\sigma)}_1(k_1,k_2;z)$ and $(k_1,k_2)\mapsto\alpha^2\la(k_1)\ov{\la(k_2)}\Psi^{(\sigma)}_2(k_1,k_2;z)$. Then, we have the corresponding decomposition
\begin{equation}\label{decomposition.of.K}
K^{(\sigma)}(z)=K^{(\sigma)}_1(z)+K^{(\sigma)}_2(z).
\end{equation} 
Since each term of the right-hand-side of \eqref{kernel:k_1} is uniformly bounded by the constant $\frac{1}{2m+\sigma\eps-z}$, we infer from \eqref{ass:la.in.L2} that 
$K^{(\sigma)}_1(z)$ is a well-defined rank-two operator in $L^2(\RR^d)$. In fact, its range coincides with the subspace of $L^2(\RR^d)$ spanned by the functions $\la$ and $\frac{\la}{\omega+m+\sigma\eps-z}$. Therefore, Lemma~\ref{lem:EV.count.1} combined with \cite[Theorem~IX.3.3]{Birman-Solomjak-87b} yield   
\begin{equation}\label{lem:estimates.EV.cf.101}
\begin{aligned}
N\bigl(z; \wh H_{\alpha}^{(\sigma)}\bigr) &= N\bigl(0; S_{\alpha}^{(\sigma)}(z)\bigr)=N\bigl(0; \Delta_{\alpha}^{(\sigma)}(z)-\alpha^2K^{(\sigma)}(z)\bigr)\\
&=N\bigl(0; \Delta_{\alpha}^{(\sigma)}(z)-\alpha^2K^{(\sigma)}_1(z)-\alpha^2K^{(\sigma)}_2(z)\bigr)\\
&\leq N\bigl(0; \Delta_{\alpha}^{(\sigma)}(z)-\alpha^2K^{(\sigma)}_2(z)\bigr)+2.
\end{aligned} 
\end{equation}
In particular, \eqref{thm.disc.proof.aim} will follow immediately if we show that 
\begin{equation}\label{lem:estimates.EV.cf.102}
N\bigl(0; \Delta_{\alpha}^{(\sigma)}(z_0)-\alpha^2K^{(\sigma)}_2(z_0)\bigr)<\infty.
\end{equation}
To this end, let $\Omega$ denote the complement of the level set of $\omega$ corresponding to~$m$, i.e. 
\begin{equation}
\Omega:=\{k\in\RR^d: \omega(k)\neq m\}.
\end{equation}
In view of Assumption~\ref{assumption}, it is clear that $\Omega$ is an open subset of $\RR^d$ with positive Lebesgue measure.

For $z\leq z_0$, we denote by $\Delta_{\alpha,\Omega}^{(\sigma)}(z)$ and $K_{2,\Omega}^{(\sigma)}(z)$ the restrictions of the operators $\Delta_{\alpha}^{(\sigma)}(z)$ and $K_{2}^{(\sigma)}(z)$ to $L^2(\Omega)$, respectively. Further, we denote by $\Delta_{\alpha,\Omega}^{(\sigma)}(\mydot;z)$ the restriction of the function $\Delta_{\alpha}^{(\sigma)}(\mydot;z)$ to $\Omega$ so that $\Delta_{\alpha,\Omega}^{(\sigma)}(z)$ is the operator of multiplication by the function $\Delta_{\alpha,\Omega}^{(\sigma)}(\mydot;z)$ in $L^2(\Omega)$. Since $\Phi^{(\sigma)}_{\alpha}(E_{\sigma\eps}(\alpha))=0$, by expressing $\Delta_{\alpha}^{(\sigma)}(k;z_0)$ as $\Delta_{\alpha}^{(\sigma)}(k;z_0)-\Phi^{(\sigma)}_{\alpha}(E_{\sigma\eps}(\alpha))$ and doing some elementary calculations, we obtain
\begin{equation*}
\Delta_{\alpha}^{(\sigma)}\!(k;z_0)\!=\!(\omega(k)-m)\bigg(1\!+\!\alpha^2\!\!\int\!\frac{|\la(q)|^2 \,\d q}{(\omega(q)\!+\!\sigma\eps\!-\!z_0\!+\!m)(\omega(q)\!+\!\omega(k)\!+\!\sigma\eps\!-\!z_0)}\bigg).
\end{equation*}
This implies that 
\begin{equation}\label{Delta.sim.omega}
\Delta_{\alpha,\Omega}^{(\sigma)}(k;z_0) \geq \omega(k)-m>0 \quad \text{for all} \quad k\in\Omega
\end{equation}
and $\Delta_{\alpha}^{(\sigma)}(\mydot;z_0)\equiv0$ on the level set of $\omega$ corresponding to $m$. In particular, the restriction of $\Delta_{\alpha}^{(\sigma)}(z_0)$ to $L^2(\RR^d\setminus\Omega)$ is the zero operator. Moreover, it is easy to see from \eqref{kernel:k_1}-\eqref{kernel:k_2} that the restriction of $K_{2}^{(\sigma)}(z_0)$ to $L^2(\RR^d\setminus\Omega)$ is the zero operator, too. That is why \eqref{lem:estimates.EV.cf.102} is equivalent to 
\begin{equation}\label{lem:estimates.EV.cf.1012}
N\bigl(0; \Delta_{\alpha,\Omega}^{(\sigma)}(z_0)-\alpha^2K_{2,\Omega}^{(\sigma)}(z_0)\bigr)<\infty.
\end{equation}
To show the latter, first we recall the monotonicity of the function $\Phi^{(\sigma)}_{\alpha}$ in $(-\infty, m+\sigma\eps)$. For all $z<z_0$, this together with \eqref{relation.between.P.and.Delta} implies that
\begin{equation}
\inf_{k\in\Omega}\Delta_{\alpha,\Omega}^{(\sigma)}(k;z) \geq \Phi^{(\sigma)}_{\alpha}(z-m)>\Phi^{(\sigma)}_{\alpha}(z_0-m)=\Phi^{(\sigma)}_{\alpha}(E_{\sigma\eps}(\alpha))=0.
\end{equation}
Hence, the multiplication operator $\Delta_{\alpha,\Omega}^{(\sigma)}(z)$ is positive for all $z<z_0$. Since the integral operator  $K_{2,\Omega}^{(\sigma)}(z):L^2(\Omega)\to L^2(\Omega)$ is well-defined and Hilbert-Schmidt, it follows that the operator 
\begin{equation}\label{B.Sch.op.T.eps}
T^{(\sigma)}_{\alpha}(z):=\bigl(\Delta_{\alpha,\Omega}^{(\sigma)}(z)\bigr)^{-1/2}K_{2,\Omega}^{(\sigma)}(z)\bigl(\Delta_{\alpha,\Omega}^{(\sigma)}(z)\bigr)^{-1/2}
\end{equation}
is also well-defined and Hilbert-Schmidt for all $z<z_0$. 

Next, we define $T^{(\sigma)}_{\alpha}(z_0)$ to be the integral operator in $L^2(\Omega)$ with the kernel
\begin{equation}\label{kern.sing.E.1}
\Theta^{(\sigma)}_{\alpha}(k_1,k_2):= \frac{\ov{\la(k_1)}\la(k_2)\Psi^{(\sigma)}_2(k_1,k_2;z_0)}{\sqrt{\Delta_{\alpha,\Omega}^{(\sigma)}(k_1;z_0)}\sqrt{\Delta_{\alpha,\Omega}^{(\sigma)}(k_2;z_0)}}, \quad k_1,k_2\in\Omega.
\end{equation}
We have the following elementary, yet quite important inequality
\begin{equation}\label{elementary.ineq}
0 \leq \frac{1}{a+b+c}-\frac{1}{a+c}-\frac{1}{b+c}+\frac{1}{c} \leq  \frac{\sqrt{ab}}{2c^2}
\end{equation}
which holds for all real numbers $a\geq0$, $b\geq0$ and $c>0$. Its proof is left to the ``Appendix''. Applying this inequality with $a=\omega(k_1)-m$, $b=\omega(k_2)-m$ and $c=2m+\sigma\eps-z_0$, we obtain the estimate
\begin{equation}\label{estimate.pointwise.on.Psi2}
0 \leq \Psi^{(\sigma)}_2(k_1,k_2;z_0) \leq \frac{1}{2(2m+\sigma\eps-z_0)^2}\sqrt{\omega(k_1)-m}\sqrt{\omega(k_2)-m}
\end{equation}
for all $k_1,k_2\in\Omega$. Using \eqref{Delta.sim.omega} and \eqref{estimate.pointwise.on.Psi2}, we can estimate the kernel $\Theta^{(\sigma)}_{\alpha}$ in \eqref{kern.sing.E.1} as follows
\begin{equation}
|\Theta^{(\sigma)}_{\alpha}(k_1,k_2)| \leq \frac{1}{2(2m+\sigma\eps-z_0)^2}|\la(k_1)||\la(k_2)|
\end{equation}
and thus we infer from \eqref{ass:la.in.L2} that $\Theta^{(\sigma)}_{\alpha}\in L^2(\Omega\times\Omega)$ with
\begin{equation}
\ds\|\Theta^{(\sigma)}_{\alpha}\|_{L^2(\Omega\times\Omega)} \leq \frac{1}{2(2m+\sigma\eps-z_0)^2}\|\la\|^2_{L^2(\RR^d)}<\infty.
\end{equation}
Therefore, $T^{(\sigma)}_{\alpha}(z_0):L^2(\Omega)\to L^2(\Omega)$ is Hilbert-Schmidt operator and the Lebesgue's dominated convergence theorem guarantees the left-continuity (with respect to the operator norm) of the operator function $T^{(\sigma)}_{\alpha}(\mydot)$ at $z_0$. 
This observation and the Weyl inequality 
\begin{equation*}
N\!\bigl(-1;\! -\alpha^2T^{(\sigma)}_{\alpha}\!(z)\bigr) \!\leq\! N\!\bigl(-0.5;\! -\alpha^2T^{(\sigma)}_{\alpha}\!(z_0)\bigr)+N\!\bigl(-0.5; \!\alpha^2T^{(\sigma)}_{\alpha}\!(z_0)\!-\!\alpha^2T^{(\sigma)}_{\alpha}\!(z)\bigr),
\end{equation*}
see \cite[Chapter~IX]{Birman-Solomjak-87b}, together with the left-continuity (with respect to the operator norm) of the operator functions $\Delta_{\alpha,\Omega}^{(\sigma)}(\mydot)$ and $K_{2,\Omega}^{(\sigma)}(\mydot)$ yield 
\begin{equation}
\begin{aligned}
N\bigl(0;\Delta_{\alpha,\Omega}^{(\sigma)}(z_0)-\alpha^2K_{2,\Omega}^{(\sigma)}(z_0)\bigr)&=\lim_{z\uparrow z_0}N\bigl(0; \Delta_{\alpha,\Omega}^{(\sigma)}(z)-\alpha^2K_{2,\Omega}^{(\sigma)}(z)\bigr)\\
&=\lim_{z\uparrow z_0}N\bigl(-1; -\alpha^2T^{(\sigma)}_{\alpha}(z)\bigr)\\
&\leq N\bigl(-0.5; -\alpha^2T^{(\sigma)}_{\alpha}(z_0)\bigr).
\end{aligned}
\end{equation}
However, $N\bigl(-0.5; -\alpha^2T^{(\sigma)}_{\alpha}(z_0)\bigr)$ must be a finite number because of the compactness of the operator $T^{(\sigma)}_{\alpha}(z_0)$, justifying \eqref{lem:estimates.EV.cf.1012}. \qed

\begin{remark}
	The finiteness of the discrete spectrum can be shown by even simpler way whenever the condition \eqref{weak.ultrared.regularity} is satisfied. We can proceed in the same way as in the above proof, the only exception being that there is no need for the ``special" decomposition \eqref{decomposition.of.K}. Instead of \eqref{B.Sch.op.T.eps}, we can consider the operator function 
	\begin{equation}
	\wh T^{(\sigma)}_{\alpha}(z):=\bigl(\Delta_{\alpha}^{(\sigma)}(z)\bigr)^{-1/2}K^{(\sigma)}(z)\bigl(\Delta_{\alpha}^{(\sigma)}(z)\bigr)^{-1/2}
	\end{equation}
	for $z<z_0$ and continuously extend it up to $z_0$ by defining $\wh T^{(\sigma)}_{\alpha}(z_0)$ to be the integral operator in $L^2(\RR^d)$ with the kernel
	\begin{equation*}\label{kern.sing.E}
	\wh\Theta^{(\sigma)}_{\alpha}(k_1,k_2):= \frac{\alpha^2\ov{\la(k_1)}\la(k_2)}{\sqrt{\Delta^{(\sigma)}_{\alpha}(k_1;z_0)}(\omega(k_1)+\omega(k_2)+\sigma\eps-z_0)\sqrt{\Delta^{(\sigma)}_{\alpha}(k_2;z_0)}}.
	\end{equation*}
	In fact, \eqref{Delta.sim.omega} together with the obvious inequality $\omega(k_1)+\omega(k_2)\geq 2m$ yields the estimate
	\begin{equation}
	|\wh\Theta^{(\sigma)}_{\alpha}(k_1,k_2)| \leq \frac{\alpha^2}{2m+\sigma\eps-z_0} \frac{|\la(k_1)|}{\sqrt{\omega(k_1)-m}}\frac{|\la(k_2)|}{\sqrt{\omega(k_2)-m}},
	\end{equation}
	which in turn implies $\wh\Theta^{(\sigma)}_{\alpha}\in L^2(\RR^d\times\RR^d)$ whenever \eqref{weak.ultrared.regularity} is satisfied.
\end{remark}

\begin{remark}
	The kernel splitting trick used in the proof of Theorem~\ref{thm1}ii) is inspired by~\cite{Ikromov-Sharipov-FAA-1998}.
\end{remark}

\subsection{Proof of \upshape{Theorem~\ref{thm2}}}\label{subsec:proof.of.thm2}
First we prove the asymptotic expansion \eqref{asymp.exp.bottom}. To this end, consider the function 
\begin{equation}
\psi(x,y)=-y-x\int \frac{|\la(q)|^2 \,\d q}{\omega(q)-xy+2\eps}
\end{equation}
for $(x,y)\in[0,\sqrt{m+2\eps})\times[0,\sqrt{m+2\eps})$. It is easy to see that $\psi$ is continuously differentiable and
\begin{equation}
\psi(0,0)=0, \quad \frac{\partial\psi}{\partial y}(0,0)=-1.
\end{equation}
Therefore, the implicit function theorem applies and yields the existence of a constant $\delta\in(0,\sqrt{m+2\eps})$ and a unique continuously differentiable function $\wh E:[0,\delta)\to\RR$ such that $\wh E(0)=0$ and $\psi(\alpha, \wh E(\alpha))=0$ for all $\alpha\in[0,\delta)$. Moreover, we have
\begin{equation}
\wh E'(0+)=\frac{\partial \psi}{\partial x}(0,0)=-\int\frac{|\la(q)|^2 \,\d q}{\omega(q)+2\eps}.
\end{equation} 
On the other hand, one can easily verify that
\begin{equation}
\Phi^{(+)}_{\alpha}(-\eps+\alpha\wh E(\alpha))=\alpha\psi(\alpha,\wh E(\alpha))=0 \quad \text{for all} \quad \alpha\in [0,\delta).
\end{equation}
Since $-\eps+\alpha\wh E(\alpha)<-\eps\leq\eps+m$ for all $\alpha\in [0,\delta)$ and $E_{\eps}(\alpha)$ is the unique zero of the function $\Phi^{(+)}_{\alpha}$ in the interval $(-\infty,\eps+m)$, we conclude that $E_{\eps}(\alpha)=-\eps+\alpha\wh E(\alpha)$ for all $\alpha\in [0,\delta)$. This implies that $E_{\eps}:[0,\delta)\to\RR$ is continuously differentiable and that the asymptotic expansion \eqref{asymp.exp.bottom} holds. 

To prove \eqref{form.ess.spec.H.weak.coupling}, we first note that 
\begin{equation}
\Phi^{(+)}_{\alpha}(m-\eps)=-m-\alpha^2 \int\frac{|\la(q)|^2\,\d q}{\omega(q)-m+2\eps}<0
\end{equation}
and thus 
\begin{equation}\label{E.alpha.small.than.m-eps}
E_\eps(\alpha)<m-\eps
\end{equation}
for all $\alpha>0$. Next, assume that \eqref{weak.ultrared.regularity} is satisfied and let $\alpha>0$ be such that \eqref{bound.on.small.alpha} holds. Using \eqref{E.alpha.small.than.m-eps} and \eqref{bound.on.small.alpha}, we obtain
\begin{equation*}
\begin{aligned}
\Phi^{(-)}_{\alpha}(E_{\eps}(\alpha))&=\Phi^{(-)}_{\alpha}(E_{\eps}(\alpha))-\Phi^{(+)}_{\alpha}(E_{\eps}(\alpha))\\
&=2\eps\bigg(1-\alpha^2 \int\frac{|\la(q)|^2\,\d q}{(\omega(q)-\eps-E_{\eps}(\alpha))(\omega(q)+\eps-E_{\eps}(\alpha))}\bigg)\\
&\geq 2\eps\bigg(1-\frac{\alpha^2}{2\eps} \int\frac{|\la(q)|^2\,\d q}{\omega(q)-m}\bigg)>0.
\end{aligned}
\end{equation*}
Therefore, for all $\alpha>0$ satisfying \eqref{bound.on.small.alpha} a possible zero $E_{-\eps}(\alpha)$ of $\Phi^{(-)}_{\alpha}(\mydot)$ lies on the right of $E_\eps(\alpha)$.

Now assume that \eqref{weak.ultrared.regularity} is not satisfied but $m>0$. Let us consider the function $\alpha\mapsto \Phi^{(-)}_{\alpha}(E_{\eps}(\alpha))$ for small $\alpha>0$. Using \eqref{asymp.exp.bottom}, we obtain 
\begin{equation}
\begin{aligned}
\Phi^{(-)}_{\alpha}(E_{\eps}(\alpha)) &\geq \eps-E_{\eps}(\alpha)-\alpha^2\int\frac{|\la(q)|^2\,\d q}{m-\eps-E_{\eps}(\alpha)}\\
&= 2\eps+\text{o}(\alpha)-\alpha^2\int\frac{|\la(q)|^2\,\d q}{m+\text{o}(\alpha)}=2\eps+\text{o}(\alpha), \quad \alpha\downarrow 0.
\end{aligned}
\end{equation}
Therefore, $E_\eps(\alpha)<E_{-\eps}(\alpha)$ for all sufficiently small $\alpha>0$.
\qed

\section{Appendix}\label{sec:appendix}
\subsection{Proof of the elementary inequality \eqref{elementary.ineq}}
The inequality holds trivially if one of $a$, $b$ is zero. So assume that $a>0$ and $b>0$. Elementary manipulations yield 
\begin{equation}\label{ap.ineq.1}
\frac{1}{a+b+c}-\frac{1}{a+c}-\frac{1}{b+c}+\frac{1}{c}=\frac{ab(a+b+2c)}{c(a+c)(b+c)(a+b+c)}.
\end{equation} 
Applying the AM-GM inequalities $\frac{a+c}{2}\geq \sqrt{ac}$ and $\frac{b+c}{2}\geq\sqrt{bc}$, we obtain 
\begin{equation}
\frac{ab(a+b+2c)}{c(a+c)(b+c)(a+b+c)}\leq \frac{\sqrt{ab}(a+b+2c)}{4c^2(a+b+c)}
<\frac{\sqrt{ab}}{2c^2}.
\end{equation}
The first inequality in \eqref{elementary.ineq} is a straightforward consequence of \eqref{ap.ineq.1}. \qed

\subsection{Spectrum of the one-boson system}\label{subsec:appendix.2}
The Hilbert space of the one-boson system is $\CC^2\otimes\cF_s^1$, where $\cF_s^1$ is the truncated Fock space
\begin{equation}
\cF_s^1:=\CC\oplus L^2(\RR^d).
\end{equation} 
For $f=\bigl(f^{(\sigma)}_0, f^{(\sigma)}_1\bigr)\in \CC^2\otimes\cF_s^1$, where $\sigma=\pm$ is the discrete variable, the one-boson Hamiltonian is given by the formal expression 
\begin{equation}\label{Hamiltonian:one-boson}
\begin{aligned}
(T_{\alpha}f)^{(\sigma)}_0 &= \sigma\eps f^{(\sigma)}_0 + \alpha\int \la(q)f^{(-\sigma)}_1\!(q) \,\d q,\\
(T_{\alpha}f)^{(\sigma)}_1\!(k) &=  (\sigma\eps+\omega(k))f^{(\sigma)}_1\!(k)+\alpha\la(k)f^{(-\sigma)}_0. 
\end{aligned}
\end{equation}
By means of the unitary transformation $V:\CC^2\otimes\cF_s^1\to\cF_s^1\oplus\cF_s^1$, defined by
\begin{equation}
V:
\begin{pmatrix}
\begin{pmatrix}
f_0^{(+)}\\
f_0^{(-)}
\end{pmatrix},
\begin{pmatrix}
f_1^{(+)}\\
f_1^{(-)}
\end{pmatrix}
\end{pmatrix}
\mapsto
\begin{pmatrix}
\begin{pmatrix}
f_0^{(+)}\\
f_1^{(-)}
\end{pmatrix},
\begin{pmatrix}
f_0^{(-)}\\
f_1^{(+)}
\end{pmatrix}
\end{pmatrix},
\end{equation}
we can block-diagonalize the Hamiltonian $T_{\alpha}$ in \eqref{Hamiltonian:one-boson}, that is, 
\begin{equation}\label{diagonalization.of.one-boson-Hamiltonian}
V^*T_{\alpha}V={\rm{diag}}\{T_{\alpha}^{(+)}, T_{\alpha}^{(-)}\}
\end{equation}
where 
\begin{equation}
T_{\alpha}^{(\sigma)}:=
\begin{pmatrix}
H^{(\sigma)}_{00} & \alpha H_{01}\\[0.5ex]
\alpha H_{10} & H^{(\sigma)}_{11}
\end{pmatrix}
\end{equation}
are $2\times2$ operator matrices acting in the truncated Fock space $\cF_s^1$ on the domain 
\begin{equation}\label{domain.of.T.sigma}
\Dom(T_{\alpha}^{(\sigma)}):=\CC\oplus\cH_1
\end{equation} 
with the weighted Hilbert space $\cH_1$ given in \eqref{Dom:H1}. The operator entries of $T_{\alpha}^{(\sigma)}$ are defined as in \eqref{def.op.entries.2}. By the same perturbation argument as in Section~\ref{subsec:prelimiaries} it follows that
\begin{equation}\label{ess.spec.T.alpha.pm}
\sess(T_{\alpha}^{(\sigma)})=\sigma(H^{(\sigma)}_{11})=[m-\sigma\eps,\infty).
\end{equation}
Furthermore, for all $z<m-\sigma\eps$ we have
\begin{equation}
\begin{aligned}
z\in\sigma(T_{\alpha}^{(\sigma)}) &\quad \Longleftrightarrow \quad 0\in\sigma\bigl(H^{(\sigma)}_{00}-z-\alpha^2H_{01}(H^{(\sigma)}_{11}-z)^{-1}H_{10}\bigr) \\
&\quad \Longleftrightarrow \quad \Phi^{(-\sigma)}_{\alpha}(z)=0,
\end{aligned}
\end{equation}
where the functions $\Phi^{(-\sigma)}_{\alpha}(\mydot)$ are defined as in \eqref{Phi}. In view of the analysis from Section~\ref{sec:main.results} on the zeros of $\Phi^{(\sigma)}_{\alpha}(\mydot)$, we conclude that
\begin{equation}\label{disc.spec.T.alpha.minus}
\sd(T_{\alpha}^{(-)})=\sigma_{\text{pp}}(T_{\alpha}^{(-)})=\{E_{\eps}(\alpha)\} 
\end{equation}
which holds for all $\alpha>0$ and independently of the condition \eqref{strong.ultrared.singularity}, whereas
\begin{equation}\label{disc.spec.T.alpha.plus}
\sd(T_{\alpha}^{(+)})=\sigma_{\text{pp}}(T_{\alpha}^{(+)})=
\begin{cases} \{E_{-\eps}(\alpha)\} & \mbox{in Cases 1 and 2a)},\\
\varnothing & \mbox{in Case 2b)}.
\end{cases}
\end{equation}
Altogether, combining \eqref{diagonalization.of.one-boson-Hamiltonian}, \eqref{ess.spec.T.alpha.pm}, \eqref{disc.spec.T.alpha.minus} and \eqref{disc.spec.T.alpha.plus}, we obtain 
\begin{equation}\label{ess.spec.one-boson.Hamiltonian}
\sess(T_{\alpha})=[m-\eps,\infty)
\end{equation}
and
\begin{equation}\label{spec.one-boson.Hamiltonian}
\sd(T_{\alpha})=\sigma_{\text{pp}}(T_{\alpha})=
\begin{cases} \{E_{-\eps}(\alpha), E_{\eps}(\alpha)\} & \mbox{in Cases 1 and 2a)},\\
\{E_{\eps}(\alpha)\} & \mbox{in Case 2b)}.
\end{cases}
\end{equation}	

{\small
	\bibliographystyle{acm}
	\bibliography{spin_boson_literature_20181108}
}
\end{document}